  \def\<{{\langle}} 
  \def\>{{\rangle}}
  \def\note#1{{}}
  \def\note#1{} 
   \def\cK{{\mathcal K}}  
  \def\cO{{\mathcal O}}
  \def\beq{\begin{equation}} 
  \def\eeq{\end{equation}}
  \def\id{\mathrm{id}} 
  \def\im{{\rm Im}}
  \def\ot{{\otimes}}
 \def\coker{\mathrm{coker}}
  \newcounter{zlist}
  \newcounter{blist}
  \newcounter{rlist}
\def\stac#1{\raise-.2cm\hbox{$\stackrel{\displaystyle\otimes}{\scriptscriptstyle{#1}}$}}
\def\cten#1{\raise-.2cm\hbox{$\stackrel{\displaystyle\widehat{\otimes}}
{\scriptscriptstyle{#1}}$}}
  \def\Label#1{\label{#1}\ifmmode\llap{[#1] }\else 
  \marginpar{\smash{\hbox{\tiny [#1]}}}\fi} 
  \def\Label{\label}
  \newtheorem{proposition}{Proposition}[section]
  \theoremstyle{definition}
  \theoremstyle{remark}
  \newcounter{c} 
  \newcommand{\etyk}[1]{\vspace{-7.4mm}$$\begin{equation}\Label{#1} 
  \addtocounter{c}{1}} 
  \renewcommand{\]}{\ifnum \value{c}=1 $$\else \end{equation}\fi} 
\def\ot{\otimes}
\def\CC{{\mathbb C}}
\def\NN{{\mathbb N}}
\def\PP{{\mathbb P}}
\def\RR{{\mathbb R}}
\def\WW{{\mathbb W}}
\def\ZZ{{\mathbb Z}}
\newcommand{\Cc}{\mathcal{C}}
\newcommand{\Hh}{\mathcal{H}}
\newcommand{\Kk}{\mathcal{K}}
\newcommand{\Tt}{\mathcal{T}}
\def\*C{{}^*\hspace*{-1pt}{\Cc}}
\def\text#1{{\rm {\rm #1}}}
 \def\1{\mathbf{1}}
\begin{document} 

\title{Circle actions on a quantum Seifert manifold} 
 \author{Tomasz Brzezi\'nski}
 \address{ Department of Mathematics, Swansea University, 
  Singleton Park, \newline\indent  Swansea SA2 8PP, U.K.} 
  \email{T.Brzezinski@swansea.ac.uk}

  \date{March 2012} 
  \subjclass[2010]{58B32; 58B34} 
  \begin{abstract} 
The quotients of a (non-orientable) quantum Seifert manifold by circle actions are described. In this way quantum weighted real projective spaces that include the quantum disc and the quantum real projective space as special cases are obtained. Bounded irreducible representations of the coordinate algebras and the $K$-groups of the algebras of continuous functions on quantum weighted real projective spaces are presented.
  \end{abstract} 
  \maketitle

\section{Introduction}
It is well known that 2-dimensional orbifolds or Riemann orbifold surfaces can be obtained as quotients of Seifert 3-manifolds by circle actions; see \cite[Example~1.16]{AdeLei:orb} or \cite{Sco:geo}. All such actions have been classified in \cite{Ray:cla}. In \cite{BrzFai:tea} we studied circle actions on the quantum 3-sphere (and other quantum odd-dimensional spheres) and  obtained quantum weighted projective spaces as the quotients  of the quantum 3-sphere by these actions (fixed points of the corresponding coactions). These can be considered as explicit examples of quantum orbifolds.

On the other hand, an algebraic description of a (non-orientable) quantum Seifert 3-manifold was presented in \cite{BrzZie:pri}. This quantum space was obtained by the prolongation of the $\ZZ_2$-action on the quantum 2-sphere to the circle action. The aim of this note is to describe circle actions on this quantum Seifert 3-manifold, identify their quotients (fixed points of the corresponding coactions), analyse their representations and calculate topological $K$-groups. In this way we demonstrate that methods employed  by Hajac, Matthes and Szyma\'nski  \cite{HajMat:rea} in the analysis of the structure of the quantum disc and the quantum real projective space as well as the results obtained threin can be transferred to this new class of quantum spaces.

Throughout we work with involutive algebras over the field of complex numbers (but the algebraic results remain true for all fields of characteristic 0). All algebras are associative and have identity.

\section{Coordinate algebras of quantum real weighted projective spaces}\label{sec.alg} 
\setcounter{equation}{0}
Coordinate algebras of quantum real projective spaces $\cO(\RR\PP_q^{2n})$ are defined as fixed points of the $\ZZ_2$ action or $\cO(\ZZ_2)$-coaction on the coordinate algebras of quantum spheres $\cO(S_q^{2n})$. In this way the quantum real projective space becomes a base of the $\ZZ_2$ principal bundle with the quantum sphere as the total space. The structure group of this bundle can be prolonged to $U(1)$. The coordinate $*$-algebras $\cO(\Sigma^{2n+1}_q)$ of the resulting total space are  generated by $\zeta_0, \zeta_1,\ldots, \zeta_n$ and a central unitary $\xi$ subject to the following relations
$$
\label{kle1}
\zeta_i\zeta_j = q\zeta_j\zeta_i \quad \mbox{for $i<j$}, \qquad \zeta_i\zeta^*_j = q\zeta_j^*\zeta_i \quad \mbox{for $i\neq j$},
$$
\begin{equation}\label{kle}
\zeta_i\zeta_i^* = \zeta_i^*\zeta_i + (q^{-2}-1)\sum_{m=i+1}^n \zeta_m\zeta_m^*, \qquad \sum_{m=0}^n \zeta_m\zeta_m^*=1, \qquad \zeta_n^* = \zeta_n\xi,
\end{equation}
where $q\in (0,1)$ is a deformation parameter; see  \cite[Proposition~5.2]{BrzZie:pri}.

For any choice of $n+1$ pairwise coprime numbers $l_0,\ldots, l_n$,  one can define the coaction of the Hopf algebra $\cO(U(1)) = \CC [u,u^*]$, where $u$ is a unitary and grouplike generator, on $\cO(\Sigma^{2n+1}_q)$ as
\begin{equation}\label{coaction}
\varrho_{l_0,\ldots, l_n} : \cO(\Sigma^{2n+1}_q) \to \cO(\Sigma^{2n+1}_q) \ot \CC [u,u^*] ,\qquad \xi \mapsto \xi\ot u^{-2l_n}, \quad \zeta_i \mapsto z_i\ot u^{l_i},
\end{equation}
$i=0,1,\ldots , n.$ 
This coaction is then extended to the whole of $\cO(\Sigma^{2n+1}_q)$ so that $\cO(\Sigma^{2n+1}_q)$ is a right $\CC [u,u^*]$-comodule algebra. The algebra 
of coordinate functions on the quantum real weighted projective space is now defined as the subalgebra of $\cO(\Sigma^{2n+1}_q)$ containing all elements invariant under the coaction $\varrho_{l_0,\ldots, l_n}$, i.e.\
$$
\cO(\RR\PP_q(l_0,l_1,\ldots, l_n)) = {\cO(\Sigma^{2n+1}_q)}^{co\CC [u,u^*]}:= \{x\in\cO(\Sigma^{2n+1}_q) \; |\; \varrho_{l_0,\ldots, l_n}(x) = x\ot 1\}.
$$
In the case $l_0=l_1 = \cdots = 1$ one obtains the algebra of functions on the quantum real  projective space $\RR\PP_q^{2n}$. 

In this note we are concerned with the two-dimensional case $n=1$. $\cO(\Sigma^{3}_q)$ is thus generated by $\zeta_0, \zeta_1$ and the central unitary $\xi$ such that
\begin{equation}
\label{kle.dim2}
\zeta_0\zeta_1 = q\zeta_1\zeta_0,
\qquad
\zeta_0\zeta_0^* = \zeta_0^*\zeta_0 + (q^{-2}-1)\zeta_1^2\xi, \qquad  \zeta_0\zeta_0^*+ \zeta_1^2\xi=1, \qquad   \zeta_1^* = \zeta_1\xi .
\end{equation}
For two coprime integers $k,l$, the $*$-algebra coaction $\varrho:= \varrho_{k,l}$ of $\cO(U(1))$ on $\cO(\Sigma)^2$ comes out as
\begin{equation}
\label{coa}
 \varrho(\zeta_0) = \zeta_0\ot u^k, \qquad \varrho(\zeta_1) = \zeta_1\ot u^l, \qquad \varrho(\xi) = \xi\ot u^{-2l},
 \end{equation}
 Note that, in view of the last of equations \eqref{kle.dim2} the value of $\varrho$ on $\xi$ is determined by its value on $\zeta_1$ (and vice versa), since $\varrho$ is required to be a $*$-algebra map. Presently we determine the structure of the coinvariant subalgebra of $\cO(\Sigma^{3}_q)$.
 
 As explained in \cite{BrzZie:pri}, and as it is evident from relations \eqref{kle.dim2}, the linear basis for $\cO(\Sigma^{3}_q)$ is provided by $1$ and 
 $$
 \zeta_0^m \zeta_1^p\xi^r, \qquad {\zeta_0^*}^m \zeta_1^p\xi^r, \qquad m,p\in \NN, \; r\in \ZZ, \qquad (m,p,r)\neq (0,0,0).
 $$
 Obviously, the second set of basis elements is (proportional to) the $*$-conjugate of the first, hence we can restrict the analysis of coinvariants to the first set and then take the $*$-conjugate of the results. Since $\varrho$ is an algebra map,
 $$
 \varrho:  \zeta_0^m \zeta_1^p\xi^r\mapsto \zeta_0^m \zeta_1^p\xi^r\ot u^{km + (p-2r)l}.
 $$
 Hence $\zeta_0^m \zeta_1^p\xi^r$ is a coinvariant element if and only if $km + (p-2r)l=0$. Since $k$ and $l$ are coprime, there must exist $n\in \NN$ such that $m=nl$ and hence   $p = 2r-kn$. So, all elements of $\cO(\Sigma^{3}_q)$ invariant under the coaction $\varrho$ must be linear combinations of
 $$
 \zeta_0^{nl} \zeta_1^{2r-kn}\xi^r
 $$
 and their $*$-conjugates. At this point we need to consider two cases.
 
 First, if $k=2s$ is an even number, then $l$ is odd, $p$ is an even number,  $r-sn \in \NN$ and 
 $$
 \zeta_0^{nl} \zeta_1^{2r-kn}\xi^r= \zeta_0^{nl} \zeta_1^{2(r-sn)}\xi^r = (\zeta_0^{l}\xi^s)^n (\zeta_1^{2}\xi)^{r-sn}. 
 $$
The elements $a:= \zeta_1^{2}\xi$ and $c_+:=\zeta_0^{l}\xi^s$ are invariant under the coaction $\varrho$ and they generate the $*$-subalgebra of coinvariants of $\cO(\Sigma^{3}_q)$. Note that $a^* = a$. 

Second, if $k=2s-1$ is an odd number, then set $t$ to be the least natural number such that $r - sn +t \geq 0$. Since $p = 2r-kn = 2r - 2sn +n$ is non-negative also $n-2t$ is non-negative and we can write
$$
 \zeta_0^{nl} \zeta_1^{2r-kn}\xi^r = \zeta_0^{nl} \zeta_1^{2(r-sn) +2t + n-2t}\xi^r
  \sim  (\zeta_0^{l}\zeta_1 \xi^s)^{n-2t} (\zeta_1^{2}\xi)^{r-sn +t} (\zeta_0^{2l}\xi^k)^t . 
$$
The elements $a:= \zeta_1^{2}\xi$, $b = \zeta_0^{l}\zeta_1 \xi^s$ and $c_-:=\zeta_0^{2l}\xi^k$ are invariant under the coaction $\varrho$ and they generate the $*$-subalgebra of coinvariants of $\cO(\Sigma^{3}_q)$.  

\begin{proposition}\label{prop.algebras}
(1) For $k=2s$ and all $l$ coprime with $k$, the $\varrho_{k,l}$-coinvariant $*$-subalgebra of $\cO(\Sigma^{3}_q)$ is generated by $a =\zeta_1^{2}\xi$ and $c_+ = \zeta_0^{l}\xi^s$, which satisfy the following relations:
\begin{equation}\label{even}
a^* =a, \qquad ac_+ = q^{-2l}c_+a, \qquad c_+c_+^* = \prod_{m=0}^{l-1}(1-q^{2m}a), \qquad c_+^*c_+ =  \prod_{m=1}^l(1-q^{-2m}a).
\end{equation}
As these relations do not depend on the choice of $k$ but only on its parity, we can choose $k=2$ and $l$ any odd number. A $*$-algebra generated by $a, c_+$ subject to relations \eqref{even} will be denoted by $\cO(\RR\PP_q^2(l;+))$. 

(2) For $k=2s-1$ and all $l$ coprime with $k$, the $\varrho_{k,l}$-coinvariant $*$-subalgebra of $\cO(\Sigma^{3}_q)$ is generated by $a =\zeta_1^{2}\xi$, $b= \zeta_0^{l}\zeta_1 \xi^s$ and $c_- = \zeta_0^{2l}\xi^k$, which satisfy the following relations:
$$
a^* =a, \qquad ab = q^{-2l}ba, \qquad ac_- = q^{-4l}c_-a, \qquad b^2 = q^{3l}ac_-, \qquad bc_- = q^{-2l}c_-b, 
$$
\begin{equation}\label{odd}
bb^* = q^{2l}a\prod_{m=0}^{l-1}(1-q^{2m}a), \quad b^*b = a \prod_{m=1}^l(1-q^{-2m}a), \quad b^*c_- = q^{-l} \prod_{m=1}^l(1-q^{-2m}a)b, 
\end{equation}
$$
c_-b^* = q^{l}b\prod_{m=0}^{l-1}(1-q^{2m}a), \qquad c_-c_-^* = \prod_{m=0}^{2l-1}(1-q^{2m}a), \qquad c_-^*c_- =  \prod_{m=1}^{2l}(1-q^{-2m}a). 
$$
As these relations do not depend on the value of $k$ but only on its parity, we can choose $k=1$ and $l$ any positive integer. A $*$-algebra generated by $a, b, c_-$ subject to relations \eqref{even} will be denoted by $\cO(\RR\PP_q^2(l;-))$. 
\end{proposition}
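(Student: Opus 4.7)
The plan is to build on the analysis immediately preceding the proposition, which already identifies a linear spanning family of the coinvariant subalgebra and exhibits each spanning monomial as a product of the listed generators. What remains is (a) to verify the listed relations by direct computation in $\cO(\Sigma^3_q)$, and (b) to confirm that these relations are complete, i.e.\ that the abstract $*$-algebra presented by them surjects onto the coinvariant subalgebra via an isomorphism.

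For (a), the only auxiliary commutation rule needed beyond those in \eqref{kle.dim2} is $\zeta_0 a = q^{2}a\zeta_0$ (equivalently, $\zeta_0^{*}a = q^{-2}a\zeta_0^{*}$), obtained from $\zeta_0\zeta_1=q\zeta_1\zeta_0$, the centrality of $\xi$, and $a^*=a$. Coupling this with $\zeta_0\zeta_0^{*}=1-a$ and $\zeta_0^{*}\zeta_0 = 1-q^{-2}a$ (both immediate from \eqref{kle.dim2}), a straightforward induction on $l$ yields
\[
\zeta_0^{l}(\zeta_0^{*})^{l}=\prod_{m=0}^{l-1}(1-q^{2m}a), \qquad (\zeta_0^{*})^{l}\zeta_0^{l}=\prod_{m=1}^{l}(1-q^{-2m}a).
\]
Since $\xi$ is central and unitary, $c_+^{*}=(\zeta_0^{*})^{l}\xi^{-s}$, so $c_+c_+^{*}$ and $c_+^{*}c_+$ reduce at once to these two displays, while $ac_+=q^{-2l}c_+a$ is immediate. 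The odd case reduces to the same two induction formulas together with $\zeta_1\zeta_0^{l}=q^{-l}\zeta_0^{l}\zeta_1$, used to sort the $\zeta_0$-$\zeta_1$ factors appearing in $b^{2},\,bb^{*},\,b^{*}b,\,b^{*}c_-,\,c_-b^{*}$; for instance $b^{2}=q^{3l}ac_-$ follows by moving the central $\xi$'s to the left and applying this sorting once.

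For (b), the plan is to exhibit a normal form for the abstract algebra. In the even case, the relations \eqref{even} allow every word in $a,c_+,c_+^{*}$ to be reduced to a linear combination of $c_+^{n}a^{r}$ and $(c_+^{*})^{n}a^{r}$ with $n,r\in\NN$; these monomials are in bijection with the coinvariant basis elements $\zeta_0^{nl}\zeta_1^{2r}\xi^{sn+r}$ and their $*$-conjugates identified before the proposition, so the tautological surjection from the abstract algebra onto the coinvariant subalgebra is also injective. The odd case is analogous, with normal form $b^{\varepsilon}c_-^{n}a^{r}$, $\varepsilon\in\{0,1\}$ (the relation $b^{2}=q^{3l}ac_-$ absorbing higher powers of $b$), and its $*$-conjugate family. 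The main obstacle is the bookkeeping in this odd case: the cross-relations between $b,b^{*},c_-,c_-^{*}$ must be organised into a confluent rewriting system so that every mixed monomial reduces to a unique normal form, and the numerous powers of $q$ produced by the underlying $\zeta_0$-$\zeta_1$ commutations must be tracked with care.
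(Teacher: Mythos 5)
Your part (a) is essentially the paper's proof: the paper establishes exactly your two product formulas (in the slightly more general form of its equations \eqref{powers} for $\zeta_0^m{\zeta_0^*}^n$ and ${\zeta_0^*}^n\zeta_0^m$), derived by the same induction using $\zeta_0\zeta_1=q\zeta_1\zeta_0$, the centrality of $\xi$, and $\zeta_0\zeta_0^*=1-a$, $\zeta_0^*\zeta_0=1-q^{-2}a$; the relations \eqref{even} and \eqref{odd} and the generation claim then follow exactly as you say, the latter from the discussion preceding the proposition. Your part (b) goes beyond what the paper proves: the proposition as stated only asserts that the coinvariant subalgebra is \emph{generated} by the listed elements and that they \emph{satisfy} the listed relations, and then \emph{defines} $\cO(\RR\PP_q^2(l;\pm))$ as the abstract algebra on those relations; the paper nowhere argues that the relations are a complete presentation. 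Your normal-form argument for the even case is correct and complete as sketched (the monomials $c_+^na^r$ and their conjugates do biject with the coinvariant basis elements $\zeta_0^{nl}\zeta_1^{2r}\xi^{sn+r}$), and it is a worthwhile addition; but note that in the odd case you have only named the difficulty (confluence of the rewriting system for $b,b^*,c_-,c_-^*$) rather than resolved it, so that portion remains a plan rather than a proof. Since the statement being proved does not require faithfulness of the presentation, this does not constitute a gap in your proof of the proposition itself.
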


\begin{proof}
Applying relations \eqref{kle.dim2} repetitively one easily finds that
\begin{eqnarray}
\zeta_0^m {\zeta_0^*}^n &=&
\begin{cases}
\prod_{p=0}^{m-1} \left(1-q^{2p}\zeta_1^2 \xi \right), & \mbox{if \ $m=n$,}\\
\zeta_0^{m-n}\prod_{p=0}^{n-1} \left(1-q^{2p}\zeta_1^2 \xi \right), & \mbox{if \ $m > n$,}\\
\prod_{p=0}^{m-1} \left(1-q^{2p}\zeta_1^2 \xi \right) {\zeta_0^*}^{n-m},  & \mbox{if \ $n > m$,}
\end{cases} 
\notag 
\\  ~ && ~ \label{powers} \\
{\zeta_0^*}^n \zeta_0^m &=&
\begin{cases}
\prod_{p=1}^{m} \left(1-q^{-2p}\zeta_1^2 \xi\right), &\mbox{if \ $m=n$,}\\
{\zeta_0^*}^{n-m} \prod_{p=1}^{m} \left(1-q^{-2p}\zeta_1^2 \xi\right), & \mbox{if \ $n > m$,}\\
\prod_{p=1}^{n} \left(1-q^{-2p}\zeta_1^2 \xi\right) \zeta_0^{m-n},  & \mbox{if \ $n < m$.}
\end{cases} \notag
\end{eqnarray}
Relations \eqref{even} and \eqref{odd} follow by the defintions of $a,b,c_\pm$ and  equations \eqref{powers} and  \eqref{kle.dim2}. The remaining assertions are consequences of the discussion preceding Proposition~\ref{prop.algebras}.
\end{proof}

At this point we would like to make a few comments on algebras described in Proposition~\ref{prop.algebras}. We begin by looking at the case $l=1$. In $\cO(\RR\PP_q^2(1;+))$, the third relation in \eqref{even} implies that $a$ can be expressed as a product of $c_+$ and $c_+^*$, and hence the whole information about this algebra is contained in the second of relations \eqref{even}. This is a defining relation of the coordinate algebra of the quantum disc \cite{KliLes:two}, hence $\cO(\RR\PP_q^2(1;+))$ is isomorphic to the quantum disc. In the odd case, $\cO(\RR\PP_q^2(1;-))$ is isomorphic to the coordinate algebra of the quantum real projective space \cite{Haj:str}. This is not surprising, since $\varrho_{1,1}$ is the $\cO(U(1))$-coaction which makes $\cO(\Sigma^{3}_q)$ into a noncommutative principal bundle over the quantum real projective space. For any $l$, the subalgebra of $\cO(\RR\PP_q^2(l;-))$ generated by $a$ and $b$ is isomorphic to the coordinate algebra of the quantum teardrop $\WW\PP_q(1,l)$ \cite{BrzFai:tea}. Finally, in the classical limit $q=1$, for all $l>1$ ($l$ odd in the case $+$) the algebras $\cO(\RR\PP_q^2(l;+))$, $\cO(\RR\PP_q^2(l;-))$ describe singular spaces due to the repeated roots of the polynomials on the right hands of equations \eqref{even}, \eqref{odd}. If $q\neq 1$, the roots are separated, so the singularities are  resolved. This is similar to quantum teardrops $\WW\PP_q(1,l)$, and it is another clear indication how noncommutativity can be used to resolve singularities of affine spaces \cite{Van:cre}.

\section{Representations}\label{sec.rep} 
\setcounter{equation}{0}
Bounded $*$-representations of algebras  $\cO(\RR\PP_q^2(l;+))$ and  $\cO(\RR\PP_q^2(l;-))$ are derived and classified in a standard way; see, e.g.\ the proofs of \cite[Theorem~4.5]{HajMat:rea} or \cite[Proposition~2.2]{BrzFai:tea}. Hence we merely list all bounded irreducible $*$-representations (up to a unitary equivalence), and we make a few comments about their relation to representations of $\cO(\Sigma^{3}_q)$.

\subsection{Bounded representations of $\cO(\RR\PP_q^2(l;+))$.}\label{sec.rep.e} There is a family of one-di\-men\-sional representations of $\cO(\RR\PP_q^2(l;+))$ labelled by $\theta\in [0,1)$ and given by
\begin{equation}\label{rep.e}
\pi_\theta(a) = 0, \qquad \pi_\theta (c_+) = e^{2\pi i\theta}.
\end{equation}
All other representations are infinite dimensional, labelled by $r=1,\ldots , l$, and given by
\begin{equation}\label{reps.e}
\pi_r(a) e_n^r = q^{2(ln+r)} e_n^r, \quad \pi_r(c_+) e_n^r =  \prod_{m=1}^{l}\left(1 - q^{2(ln+r-m)}\right)^{1/2}e_{n-1}^r, \quad \pi_r(c_+) e_0^r =0,
\end{equation}
where $e_n^r$, $n\in \NN$ is an orthonormal basis for the representation space $\Hh_r\cong l^2(\NN)$.

\subsection{Bounded representations of $\cO(\RR\PP_q^2(l;-))$.}\label{sec.rep.o}
There is a family of one-di\-men\-sional representations of $\cO(\RR\PP_q^2(l;-))$ labelled by $\theta\in [0,1)$ and given by
\begin{equation}\label{rep.o}
\pi_\theta(a) = 0, \qquad \pi_\theta(b) = 0, \qquad  \pi_\theta(c_-) = e^{2\pi i\theta}.
\end{equation}
All other representations are infinite dimensional, labelled by $r=1,\ldots , l$, and given by
\begin{equation}\label{reps.o.1}
\pi_r(a) e_n^r = q^{2(ln+r)} e_n^r, \quad \pi_r(b) e_n^r =  q^{ln+r}\prod_{m=1}^{l}\left(1 - q^{2(ln+r-m)}\right)^{1/2}e_{n-1}^r, \quad \pi_r(b) e_0^r =0,
\end{equation}
\begin{equation} \label{reps.o}
\pi_r(c_-) e_n^r =  \prod_{m=1}^{2l}\left(1 - q^{2(ln+r-m)}\right)^{1/2}e_{n-2}^r, \qquad \pi_r(c_-) e_0^r = \pi_r(c_-) e_1^r= 0, 
\end{equation}
where $e_n^r$, $n\in \NN$, is an orthonormal basis for the representation space $\Hh_r \cong l^2(\NN)$.

\subsection{Relation to representations of $\cO(\Sigma^{3}_q)$.}\label{sec.rep.rel} Amongst the representations of $\cO(\Sigma^{3}_q)$ listed in \cite{BrzZie:pri} we find an infinite-dimensional representation $\pi_{0,+}$, defined as follows. Let $e_n$ be the infinite orthonormal basis of the representation space $\Hh\cong l^2(\NN)$. Then
\begin{equation}\label{rep.sei}
\pi_{0,+}(\zeta_0^m\zeta_1^p\xi^s) e_n = q^{p(n+1)}\prod_{t=0}^{m-1}\left(1 - q^{2(n-t)}\right)^{1/2}e_{n-m},
\end{equation}
and zero, whenever $m$ exceeds $n$.  This is a faithful representation, since the condition
$$
\pi_{0,+}(\sum_{m,s}\sum_{p=0}^N\mu_{m,p,s}\zeta_0^m\zeta_1^p\xi^s) =0,
$$
 for all $n \in \NN$, yields an (infinite) system of equations
$$
\sum_{p=0}^N(\sum_s\mu_{m,p,s}) q^{p(n+1)} =0, \qquad  n\in \NN.
$$
The determinant of the first $N+1$ of these equations is the Vandermonde determinant, hence it is non-zero. 

For infinite dimensional representation spaces $\Hh_r$ described in Sections~\ref{sec.rep.e} and \ref{sec.rep.o}, consider the vector space isomorphism,
$$
\phi : \bigoplus_{r=1}^{l}\Hh_r \to \Hh, \qquad e_n^r\mapsto e_{ln+r-1}.
$$
 Using \eqref{rep.sei} and \eqref{reps.e} or \eqref{reps.o} one easily finds that, for all $x\in \cO(\RR\PP_q^2(l;\pm))$,
$$
{\phi}\left(\pi_r(x)e_n^r\right) = \pi_{0,+}\left(j_\pm(x)\right) {\phi}(e_n^r),
$$
where the $j_\pm$ are embeddings of $\cO(\RR\PP_q^2(l;\pm))$ into $\cO(\Sigma^{3}_q)$ described in Proposition~\ref{prop.algebras}. Therefore, the direct sum of representations $\pi_r$ of $\cO(\RR\PP_q^2(l;\pm))$ can be seen as the restriction of $\pi_{0,+}$ to the relevant subalgebras of $\cO(\Sigma^{3}_q)$. Since $\pi_{0,+}$ is a faithful representation, the direct sum $\pi$ of all the $\pi_r$ is a faithful representation too.

\section{$K$-theory}\label{sec.top} 
\setcounter{equation}{0}
The $C^*$-algebras $C(\RR\PP_q(l;\pm))$ of continuous functions on the quantum real weighted  projective spaces are  defined as the subalgebras of bounded operators on the Hilbert space $\bigoplus_{r=1}^{l}\Hh_r$ obtained as the completions of $\bigoplus_{r=1}^{l} \pi_r\left( \cO(\RR\PP_q(l;\pm))\right)$. In this section we calculate the $K$-groups for these algebras, closely following the approach of \cite[Section~4]{HajMat:rea}. We also identify the algebras $C(\RR\PP_q(l;\pm))$ as pullbacks of quantum discs or  quantum real projective spaces over the circle algebra.

\subsection{The six-term exact sequence.}\label{sec.six}
Consider closed $*$-ideals $J_\pm$ of $C(\RR\PP_q(l;\pm))$ generated by $a$, and let  $p_\pm: C(\RR\PP_q(l;\pm)) \to  C(\RR\PP_q(l;\pm))/J_\pm$ be the canonical surjections. In view of  relations \eqref{even} and \eqref{odd}, $C(\RR\PP_q(l;\pm))/J_\pm$ is generated by $p_\pm(c_\pm)$ and $p_\pm (c_\pm c_\pm^*) = p_\pm (c_\pm^* c_\pm) =1$, hence $C(\RR\PP_q(l;\pm))/J_\pm\cong C(S^1)$. 

To identify $J_\pm$ we look at the direct sum $\pi = \bigoplus_{r=1}^l \pi_r$ of infinite dimensional  representations $\pi_r$ and we follow the arguments of the proof of \cite[Proposition~1.2]{She:poi}. First, in view of \eqref{reps.e} and \eqref{reps.o.1}, $\pi_r(a)$ is a compact operator (remember that $q\in (0,1)$). Hence $\pi_r(J_\pm) \subseteq \Kk_r$, where $\Kk_r = \Kk(\Hh_r)$ denotes the ideal of compact operators on the representation space $\Hh_r$. Consequently $\pi(J_\pm) \subseteq  \bigoplus_{r=1}^{l}\cK_r$. Since the spectrum of  $\pi(a)$ consists of distinct numbers 0, $q^{2(n+1)}$, $n\in \NN$, 
$J_\pm$ contains all orthogonal projections  to one-dimensional spaces spanned by the $e_n^r$.  Furthermore, $\pi_r(ac_+)$ and $\pi_r(ab)$ are step-by-one operators on their respective Hilbert spaces $\Hh_r$ with non-zero weights, hence $\bigoplus_{r=1}^{l}\cK_r\subseteq\pi(J_\pm)$. In this way we obtain  short exact sequences (one for the $+$ case and one for the $-$ case):
 \begin{equation}\label{seq}
\xymatrix{0 \ar[r] & \bigoplus_{r=1}^{l}\cK_r \ar[r]^-{i_\pm} & C(\RR\PP_q(l;\pm)) \ar[r]^-{p_\pm} & C(S^1) \ar[r] & 0.}
\end{equation}
Since $K_0(C(S^1)) = K_1(C(S^1)) = K_0(\cK_r) = \ZZ$ and $K_1(\cK_r) =0$, the six-term sequences of the $K$-groups lead to the following exact sequence
 \begin{equation}\label{seq.6.red}
\xymatrix{0 \ar[r] &  K_1(C(\RR\PP_q(l;\pm))) \ar[r]^-{} &\ZZ  
\ar[r]^-{\delta_\pm} & \ZZ^l \ar[r]^-{K_0({i}_\pm)} & K_0(C(\RR\PP_q(l;\pm))) \ar[r]^-{K_0({p}_\pm)} & \ZZ \ar[r] & 0.}
\end{equation}
The calculation of the connecting index maps $\delta_\pm$, which can be performed using the method described in \cite[Section~4]{HajMat:rea}, gives distinct answers for the $+$ and $-$ cases, and we record these answers presently.

\subsection{$K$-groups of $C(\RR\PP_q(l;+))$.}\label{sec.k.even}
Since $p_+(c_+)$ is a unitary generator of $C(S^1)$,  its $K_1$-class is $[p_+(c_+)]_1 =1$, and it generates $K_1(C(S^1)) = \ZZ$. To calculate the value of the index map $\delta_+$ on $[p_+(c_+)]_1$ we need to lift $p_+(c_+)$ to a coisometry $d_+ \in C(\RR\PP_q(l;+))$ such that $p_+(c_+) = p_+(d_+)$. Then $\delta_+([p_+(c_+)]_1) = [1 - d_+^*d_+]_0$; see \cite[Section~9.2]{RorLar:int}. Such a coisometry $d_+$ is determined from the condition $\pi_r(d_+) = U_r$, $r=1,2,\ldots, r$, where $U_r$ is the unilateral shift $U_r(e_0^r) = 0$, $ U_r(e_n^r) =e_{n-1}^r$, $n\geq 0$. We write $U = \oplus_{r=1}^l U_r$. Note  that $d_+$ is a coisometry, since clearly $UU^* = 1$, so $\pi (d_+ d_+^*) = \pi(1)$, i.e.\ $d_+ d_+^* =1$, as $\pi$ is a faithful representation.

In view of \eqref{reps.e}, $\pi(c_+) = U \pi (|c_+|)$. Since $\pi$ is a faithful representation, this implies that $c_+ = d_+ |c_+|$. Furthermore, equations \eqref{even} and the definition of $J_+$ imply that $p_+(|c_+|) =1$. Hence
$$
p_+(c_+) = p_+(d_+ |c_+|) =  p_+(d_+),
$$
as required. The existence of $d_+$ is guaranteed by the functional calculus and the observation that 
$$
U_r = \pi_r(c_+)\prod_{m=1}^l (1-q^{-2m} \pi_r(a))^{-{1}/2}.
$$
Since $(1-U_r^*U_r)e_n^r = \delta_{n,0}$, the index map $\delta_+$ assigns $(1,1,\ldots, 1) \in \ZZ^l$ to the class of $p_+(c_+)$, hence
$$
\delta_+ :\ZZ\to \ZZ^l , \qquad m\mapsto (m,m,\ldots, m).
$$
In particular $\delta_+$ is injective and the exactness at the  third term in the sequence \eqref{seq.6.red} yields $K_1(C(\RR\PP_q(l;+))) = 0$. The remainder of the sequence \eqref{seq.6.red} is
\begin{equation} \label{seq.6.rem}
\xymatrix{0 \ar[r]  &\ZZ  
\ar[r]^-{\delta_+} & \ZZ^l \ar[r]^-{K_0({i}_+)} & K_0(C(\RR\PP_q(l;+))) \ar[r]^-{K_0({p}_+)} & \ZZ \ar[r] & 0.}
\end{equation}
The sequence \eqref{seq.6.rem} splits, since $\ZZ$ is a free $\ZZ$-module, hence
$$
K_0(C(\RR\PP_q(l;+))) = \im ({K_0({i}_+)}) \oplus \ZZ \cong \coker (\delta_+)  \oplus \ZZ \cong \ZZ^l,
$$
where the last isomorphism is a consequence of the isomorphism
$$
\ZZ^{l} / \im (\delta_+) \to \ZZ^{l-1} , \qquad [(n_1,n_2,\ldots, n_l)]\mapsto (n_2-n_1,n_3-n_1,\ldots, n_l-n_1).
$$

In particular, for $l=1$, the quantum disc algebra $C(D_q):= C(\RR\PP_q(1;+))$ is isomorphic to the Toeplitz algebra, and the above calculation returns $K$-groups for the Toeplitz algebra; see \cite[Example~9.4.4]{RorLar:int}. 

In fact, the above derivation of $K$-groups can be used for obtaining better insight into the structure of $C(\RR\PP_q(l;+))$. Denote by $\Tt_r$ the $C^*$-algebra generated by the unilateral shift $U_r^*: e_n^r \mapsto e_{n+1}^r$. This is a concrete realisation of the Toeplitz algebra. $\Tt_r$ contains compact operators $\Kk_r$ and projects on $C(S^1)$ by the symbol map $\sigma_r: U_r\mapsto u$, where $u$ is the unitary generator of $C(S^1)$ (which we shall identify with $p_+(c_+)$). More precisely, there is a short exact sequence
$$
\xymatrix{0 \ar[r] & \cK_r \ar[r] & \Tt_r \ar[r]^-{\sigma_r} & C(S^1) \ar[r] & 0.}
$$
Since
$$
(\pi_r(c_+) -U_r)e_n^r = \left(\prod_{m=1}^{l}\left(1 - q^{2(ln+r-m)}\right)^{1/2} - 1\right)e_{n-1}^r,
$$
and $\prod_{m=1}^{l}\left(1 - q^{2(ln+r-m)}\right)^{1/2} - 1$ converges to $0$ as $n$ tends to infinity, the operator $\pi_r(c_+) -U_r$ is compact. Consequently,
$
\sigma_r(\pi_r(c_+)) = u.
$
Furthermore, $\pi_r(a)$ is a compact operator hence, for all $x\in C(\RR\PP_q(l;+))$, and  $r,s = 1,2,\ldots , l$,
$$
\sigma_r(\pi_r(x)) = \sigma_s(\pi_s(x)).
$$
This means that
\begin{eqnarray*}
\bigoplus_{r=1}^{l}\cK_r \subseteq \pi(C(\RR\PP_q(l;+))) \!\!\!&\subseteq&\!\!\! \Tt_1\oplus_{\sigma}\Tt_2\oplus_{\sigma}\cdots \oplus_{\sigma}\Tt_l\\
& :=&\!\!\!  \{(x_1,\ldots, x_l)\in \Tt_1\oplus\Tt_2\oplus\cdots \oplus\Tt_l\; |\; \sigma_r(x_r) = \sigma_s(x_s), \forall r,s\},
\end{eqnarray*}
and, in combination with \eqref{seq}, yields the following commutative diagram with exact rows
$$
\xymatrix{0 \ar[r] & \bigoplus_{r=1}^{l}\cK_r \ar[r]\ar@{=}[d] & \pi\left(C(\RR\PP_q(l;+))\right) \ar[r]\ar@^{(->}[d] & C(S^1)\ar@{=}[d]  \ar[r] & 0\\
0 \ar[r] & \bigoplus_{r=1}^{l}\cK_r \ar[r] & \Tt_1\oplus_{\sigma}\Tt_2\oplus_{\sigma}\cdots \oplus_{\sigma}\Tt_l \ar[r] & C(S^1) \ar[r] & 0.}
$$
Therefore,
$$
\pi\left(C(\RR\PP_q(l;+))\right) \cong \Tt_1\oplus_{\sigma}\Tt_2\oplus_{\sigma}\cdots \oplus_{\sigma}\Tt_l .
$$
Since each of the $\Tt_r$ is isomorphic to the Toeplitz algebra $\Tt\cong C(D_q)$, each of the $\sigma_r$ corresponds to the symbol map $\sigma$,  and  $\pi$ is a faithful representation we conclude that
$$
C(\RR\PP_q(l;+)) \cong \underbrace{\Tt\oplus_{\sigma}\Tt\oplus_{\sigma}\cdots \oplus_{\sigma}\Tt}_{l-\mbox{times}}  \cong \underbrace{C(D_q)\oplus_{\sigma}C(D_q)\oplus_{\sigma}\cdots \oplus_{\sigma}C(D_q)}_{l-\mbox{times}} .
$$
The symbol map $\sigma$ in this last expression can be understood as a projection induced by the inclusion of the classical circle as the boundary of the quantum disc. 

\subsection{$K$-groups of $C(\RR\PP_q(l;-))$.}\label{sec.k.odd}
The derivation of $K$-groups follows the steps outlined in Section~\ref{sec.k.even}. The unitary $p(c_-)$ is now lifted to a coisometry $d_-$ given by  $\pi_r(d_-) = V_r$, $r=1,2,\ldots, r$, where $V_r$ is the shift $V_r(e_0^r) = 0$, $V_r(e_1^r) = 0$, $ V_r(e_n^r) =e_{n-2}^r$, $n\geq 1$. The existence of $d_-$ follows by the observation that
$$
V_r = \pi_r(c_-)\prod_{m=1}^{2l} (1-q^{-2m} \pi_r(a))^{-{1}/2}.
$$
Since $(\id - V_r^*V_r)e_n^r = \delta_{n,0} +\delta_{n,1}$, the index map is an injective function
$$
\delta_- :\ZZ\to \ZZ^l , \qquad m\mapsto (2m,2m,\ldots, 2m).
$$
Consequently, $K_1(C(\RR\PP_q(l;-))) = 0$ and
$$
K_0(C(\RR\PP_q(l;-)))  \cong \coker (\delta_-) \oplus \ZZ \cong \ZZ_2\oplus \ZZ^l,
$$
where the last isomorphism is a consequence of the isomorphism
$$
\ZZ^{l} / \im (\delta_-) \to \ZZ_2\oplus \ZZ^{l-1} , \qquad [(n_1,n_2,\ldots, n_l)]\mapsto ([n_1], n_2-n_1,n_3-n_1,\ldots, n_l-n_1).
$$

In particular, for $l=1$ the above calculation returns $K$-groups for the algebra of continuous functions on the quantum real projective space $C(\RR\PP^2_q)$ calculated in \cite[Theorem~4.8]{HajMat:rea}. By the same analysis as in the preceding section, which is enabled by the existence of a short exact sequence
$$
\xymatrix{0 \ar[r] & \cK \ar[r] & C(\RR\PP^2_q) \ar[r] & C(S^1) \ar[r] & 0,}
$$
(see \cite[p.\ 190]{HajMat:rea}), one finds that 
$$
C(\RR\PP_q(l;+)) \cong \underbrace{C(\RR\PP^2_q)\oplus_{\bar\sigma}C(\RR\PP^2_q)\oplus_{\bar\sigma}\cdots \oplus_{\bar\sigma}C(\RR\PP^2_q)}_{l-\mbox{times}} ,
$$
where the map $\bar\sigma$ is induced from the projection that corresponds to the inclusion of the classical circle as the equator of the quantum sphere $S_q^2$. Explicitly, $\bar\sigma(V_r) = u$, where $u$ is the unitary generator of $C(S^1)$. 

\section*{Acknowledgements.}
I would like to thank the organisers of the {\em Workshop on 
Noncommutative Field Theory and Gravity} (Corfu, September 7--11, 2011)  for support, hospitality and for creating stimulating  environment for exchange of ideas.

\end{document}